\newtheorem{lemma}{Lemma}
\newtheorem{theorem}{Theorem}
\def\hm#1{#1\nobreak\discretionary{}{\hbox{\ensuremath{#1}}}{}}
\def\taue{\tau^{(e)}}
\def\eps{\varepsilon}
\def\res{\mathop{\mathrm{res}}}
\def\suma{\mathop{\sum\nolimits^*}}
\def\le{\leqslant}
\def\ge{\geqslant}
\def\half{{\textstyle{1\over2}}}
\def\threehalf{{\textstyle{3\over2}}}
\def\taa{\tau_{1,2}}
\def\suma{\mathop{\sum\nolimits'}\limits}
\begin{document}

\title{Average number of squares dividing $mn$}
\author{Andrew V. Lelechenko}
\address{I.~I.~Mechnikov Odessa National University}
\email{1@dxdy.ru}

\keywords{Average order, asymmetric divisor function, Perron formula}
\subjclass[2010]{
11A25, 
11N37
}

\begin{abstract}
We study the asymptotic behaviour of $\sum_{m,n\le x} \taa(mn)$,
where $\taa(n) \hm= \sum_{a b^2 = n} 1$, using multidimensional Perron formula and complex integration method. An asymptotic formula with an error term $O( x^{10/7})$ is obtained.
\end{abstract}

\maketitle

\section{Introduction}\label{s:introduction}

Let $f$ be a multiplicative arithmetic function of one variable. The asymp\-to\-tic behaviour of $\sum_{n\le x} f(n)$ is a classic problem of analytic number theory, deeply studied for various specific functions and classes. Let us consider the problem of estimating of $\sum_{m,n\le x} f(mn)$.

The divisor function $\tau$ is a simple, but non-trivial case. Applying Busche---Ra\-ma\-nu\-jan identity
\begin{equation}\label{eq:br-identity}
\tau(mn) = \sum_{d \mid \gcd(m,n)} \tau(m/d) \tau(n/d) \mu(d)
\end{equation}
we split variables and obtain
\begin{equation*}
\sum_{m,n \le x} \tau(mn) = \sum_{\substack{j,k,l \\ j,k \le x/l}} \tau(j) \tau(k) \mu(l) = \sum_{l\le x} \mu(l) \biggl( \sum_{j\le x/l} \tau(j) \biggr)^2.
\end{equation*}
Using Huxley's estimate~\cite{huxley2005} $\sum_{j\le y} \tau(j) = y \log y + (2\gamma-1) y + O(y^{\theta+\eps})$, where~$\theta\hm=131/416$, we regroup terms and get
\begin{multline}\label{eq:tau-mn}
\sum_{m,n \le x} \tau(mn) = x^2 \Biggl(
\biggl( \sum_{l=1}^\infty {\mu(l) \over l^2} \biggr)
\biggl( \log^2 x + 2(2\gamma-1)\log x + (2\gamma-1)^2 \biggr)
-{} \\
-
\biggl( \sum_{l=1}^\infty {\mu(l) \log l \over l^2} \biggr)
\biggl( 2 \log x + 2(2\gamma-1) \biggr)
+
\sum_{l=1}^\infty {\mu(l) \log^2 l \over l^2}
\Biggr) + O(x^{1+\theta+\eps}).
\end{multline}

It is natural to ask whether the main term can be derived analytically, by complex integration method. We will not go into details, but note that
$$
\sum_{a,b=0}^\infty \tau(p^{a+b}) x^a y^b
=
\sum_{a,b=0}^\infty (a+b+1) x^a y^b
= {1-xy \over (1-x)^2 (1-y)^2},
\quad
|x|, |y| < 1.
$$
The series $\sum_{m,n=1}^\infty \tau(mn) m^{-z} n^{-w}$ converges absolutely for $\Re z, \Re w > 1$, so by multiplicativity in this region we have
\begin{equation}\label{eq:tau-series}
\sum_{m,n=1}^\infty {\tau(mn) \over m^z n^w}
=
\prod_p \sum_{a,b=0}^\infty { \tau(p^{a+b}) \over p^{az+bw}}
=
\prod_p {{1-p^{-z-w} \over (1-p^{-z})^2 (1-p^{-w})^2} }
=
{\zeta^2(z) \zeta^2(w) \over \zeta(z+w)}.
\end{equation}
Achieved representation allows to compute the coefficient of multiple Laurent series for $x^{z+w} \* z^{-1} \* w^{-1} \* \sum_{m,n=1}^\infty {\tau(mn)  m^{-z} n^{-w}}$ at $1/(z-1)(w-1)$, which ap\-pears coinciding with the main term of~\eqref{eq:tau-mn}.

\medskip

Out paper is devoted to
$$
\sum_{m,n\le x} \taa(mn),
$$
where $\taa(n) = \sum_{a b^2 = n} 1$. This function is not as lucky as $\tau$ and does not posses representation like~\eqref{eq:br-identity}, so there is no easy way to split $m$ and $n$.

The main result is
\begin{theorem}\label{th:main-theorem}
$$
\sum_{m,n\le x} \taa(mn) = C_1 x^2 + C_2 x^{3/2} + O( x^{10/7+\eps}),
$$
where $C_1=2.995\ldots$, $C_2=-5.404\ldots$ are computable constants.
\end{theorem}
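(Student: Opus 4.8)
The plan is to work with the two-variable Dirichlet series $D(z,w)=\sum_{m,n\ge1}\taa(mn)m^{-z}n^{-w}$. Since $\taa$ is multiplicative with $\taa(p^k)=\lfloor k/2\rfloor+1$, and $\taa(mn)=\prod_p\taa(p^{a_p+b_p})$ for $m=\prod_pp^{a_p}$, $n=\prod_pp^{b_p}$, the series is a convergent Euler product for $\Re z,\Re w>1$; writing $x=p^{-z}$, $y=p^{-w}$ and using $\sum_{k\ge0}(\lfloor k/2\rfloor+1)t^k=\bigl((1-t)^2(1+t)\bigr)^{-1}$ gives the local factor $\frac{1+xy-x^2y-xy^2}{(1-x)^2(1+x)(1-y)^2(1+y)}$. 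Because $(1-t)^2(1+t)=(1-t)(1-t^2)$ and $\prod_p(1+p^{-s})=\zeta(s)/\zeta(2s)$, one obtains
\[
D(z,w)=\frac{\zeta(z)\zeta(w)\zeta(2z)\zeta(2w)\zeta(z+w)}{\zeta(2z+2w)}\,V(z,w),\qquad
V(z,w)=\prod_p\Bigl(1-\frac{p^{-z-w}(p^{-z}+p^{-w})}{1+p^{-z-w}}\Bigr),
\]
where $V$ converges absolutely, and is uniformly bounded, whenever $2\Re z+\Re w>1$ and $\Re z+2\Re w>1$ (the $p$-th factor being $1+O(p^{-2\Re z-\Re w}+p^{-\Re z-2\Re w})$); in particular on $\Re z=\Re w=\sigma$ for any $\sigma>\tfrac13$, and with the constraint on one variable relaxed as soon as the other is fixed.

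Next I would invoke the truncated two-dimensional Perron formula: with $c_1=c_2=1+\eps$ and a parameter $T\le x$,
\[
\sum_{m,n\le x}\taa(mn)=\frac1{(2\pi i)^2}\int_{c_1-iT}^{c_1+iT}\!\int_{c_2-iT}^{c_2+iT}D(z,w)\frac{x^{z}}{z}\frac{x^{w}}{w}\,dw\,dz+O\!\Bigl(\frac{x^{2+\eps}}{T}\Bigr),
\]
the error coming from truncation and from the ranges $m\asymp x$ or $n\asymp x$. I then shift the $z$-contour to $\Re z=\sigma$ with $\sigma$ just above $\tfrac13$ and, within each resulting term, shift the $w$-contour as well, collecting residues. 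The double residue at $z=w=1$ produces $C_1x^2$ with $C_1=\zeta(2)^3V(1,1)/\zeta(4)$; the residues at $(z,w)=(1,\tfrac12)$ and $(\tfrac12,1)$, coming from the poles of $\zeta(2w)$ resp.\ $\zeta(2z)$, together produce $C_2x^{3/2}$ with $C_2=2\zeta(\tfrac12)\zeta(2)\zeta(\tfrac32)V(1,\tfrac12)/\zeta(3)$, which is negative since $\zeta(\tfrac12)<0$. Every other residue — the pole of $\zeta(z+w)$ along $z+w=1$, the poles at $z=0$ or $w=0$, the point $z=w=\tfrac12$ — collapses onto a one-dimensional integral whose integrand decays fast enough (after one further harmless shift) to contribute only $O(x^{1+\eps})$, negligible against $x^{10/7+\eps}$.

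It remains to bound the residual integrals: the doubly shifted integral over $\Re z=\Re w=\sigma$, truncated at height $T$, and the one-dimensional integrals left over from the residues at $z=1$ and $z=\tfrac12$ after the inner $w$-line has been pushed to the left. Once $z$ is fixed, $V(1,w)$ (resp.\ $V(\tfrac12,w)$) is holomorphic for $\Re w>0$ (resp.\ $\Re w>\tfrac14$), so that $w$-line may be placed near $\Re w=\tfrac14$; then the classical second-moment bounds $\int_0^T|\zeta(\sigma'+iu)|^2\,du\ll T^{\max(1,\,2-2\sigma')+\eps}$, together with partial summation, give contributions of order $x^{5/4}T^{1/4+\eps}$ and smaller. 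For the doubly shifted integral the factor $\zeta(z+w)$ couples the two integration variables, so rather than pointwise convexity I would write it as a convolution: setting $g(t)=|\zeta(\sigma+it)\zeta(2\sigma+2it)|/(1+|t|)$, the integral is $\ll x^{2\sigma}\bigl(\sup_{|v|\le2T}|\zeta(2\sigma+iv)|\bigr)\bigl(\int_{-T}^{T}g\bigr)^2$, which is controlled by the same second-moment estimate and the convexity bound for $\zeta$. Choosing $T=x^{4/7}$ balances the Perron error $x^{2+\eps}/T$ against these contributions and yields the stated error $O(x^{10/7+\eps})$. The main obstacle is exactly this estimation of the multidimensional integral: handling the diagonal factor $\zeta(z+w)$ while pushing the contours far enough left to gain in $x$ yet remaining inside the convergence region of $V$, and balancing all the parameters consistently against $T$, is what pins the exponent at $10/7$.
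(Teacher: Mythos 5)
Your overall route coincides with the paper's: you derive the same factorization $F(z,w)=\zeta(z)\zeta(2z)\zeta(w)\zeta(2w)\zeta(z+w)G(z,w)$ (your $V(z,w)/\zeta(2z+2w)$ is exactly the paper's absolutely convergent factor $G$), apply a truncated two-dimensional Perron formula with error $x^{2+\eps}T^{-1}$, shift to $\Re z=\Re w=\sigma$ just above $1/3$, extract the residues at $(1,1)$ and $(1,\tfrac12),(\tfrac12,1)$ (your formulas for $C_1$, $C_2$ agree with the paper's), and finally take $T=x^{4/7}$. The only structural difference is that you shift one variable at a time, while the paper moves the whole two-dimensional contour and estimates the $24$ faces of the hyperrectangle; your treatment of the leftover one-dimensional integrals at $\Re w\approx\tfrac14$ via second moments, giving $x^{5/4}T^{1/4+\eps}$, is fine and even slightly sharper than the paper's corresponding bound.

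There is, however, a genuine gap in your residue bookkeeping, and it concerns exactly the term that pins the exponent. When you move the $w$-line of the already shifted double integral (i.e.\ with $z$ on $\Re z=\sigma\approx\tfrac13$, $|\Im z|\le T$) from $\Re w=1+\eps$ down to $\sigma$, you cross not only the pole of $\zeta(z+w)$ at $w=1-z$, but also the poles of $\zeta(w)$ at $w=1$ and of $\zeta(2w)$ at $w=\tfrac12$. The first of these leaves the one-dimensional integral $\zeta(2)\int_{\sigma-iT}^{\sigma+iT}\zeta(z)\zeta(2z)\zeta(1+z)G(z,1)\,x^{1+z}z^{-1}\,dz$, and with the tools you invoke (functional equation, $\mu(1/2)\le 1/6$, mean square of $\zeta$ on $\Re s=2/3$) this is only $\ll x^{4/3}T^{1/6+\eps}$ — it is the analogue of the paper's dominant face integral $I_1$, it is \emph{not} $O(x^{1+\eps})$, and at $T=x^{4/7}$ it is exactly $x^{10/7+\eps}$, larger than the $x^{5/4}T^{1/4+\eps}$ you treat as the main non-Perron contribution (the $w=\tfrac12$ residue over $\Re z=\sigma$ is harmless, of size about $x^{5/6+\eps}T^{1/6}$). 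Consequently your balancing argument is off: with your accounting the optimal choice would be $T=x^{3/5}$ with error $x^{7/5}$, which the overlooked term does not allow; it is the balance $x^{4/3}T^{1/6}\asymp x^{2}T^{-1}$ that forces $T=x^{4/7}$ and the exponent $10/7$. The theorem as stated still follows once this term is included, but you should add it explicitly, and also say a word about the horizontal segments $\Re w$ (or $\Re z$) running over $[\sigma,1+\eps]$ at height $\pm T$ produced by each truncated shift (the paper's $I_2$–$I_7$), which contribute further $x^{2}T^{-1}$- and $x^{4/3}$-size terms not covered by your Perron error.
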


This theorem is analogous to the estimate by Graham and Kolesnik~\cite{graham1988}
$$
\sum_{n\le x} \taa(n) = \zeta(2) x + \zeta(1/2) x^{1/2} + O(x^{\beta+\eps}),
\quad
\beta = 1057/4785 \approx 0.2209.
$$

\section{Notations}

Letter $p$ with or without indexes denotes a prime number.
We write~$f\hm\star g$ for the Dirichlet convolution
$$ (f \star g)(n) = \sum_{d|n} f(d) g(n/d). $$

In asymptotic relations we use $\sim$, $\asymp$, Landau symbols $O$ and $o$, Vinogradov symbols $\ll$ and $\gg$ in their usual meanings. All asymptotic relations are given as an argument (usually $x$) tends to the infinity.

Letter $\gamma$ denotes Euler---Mascheroni constant.  Everywhere $\eps>0$ is an arbitrarily small number (not always the same even in one equation).

As usual $\zeta(s)$ is the Riemann zeta-function.
Real and imaginary com\-po\-nents of the complex~$s$ are denoted as $\sigma:=\Re s$ and~$t:=\Im s$, so~$s=\sigma+it$.

For a fixed $\sigma\in[1/2,1]$ define
$$
\mu(\sigma) := \limsup_{t\to\infty} {\log \bigl|\zeta(\sigma+it)\bigr| \over \log t}.
$$

\section{Preliminary estimates}\label{s:preliminary}

We say that a function is symmetric if any permutation of arguments does not change its value.

Let $f$ be an arithmetic function of $r$ variables. The associated Dirichlet series are defined as
$$
F(s_1,\ldots,s_r) = \sum_{n_1,\ldots,n_r=1}^\infty f(n_1,\ldots,n_r) n_1^{-s_1} \cdots n_r^{-s_r}
$$
and a tuple $(\sigma_1,\ldots,\sigma_r)$ is called abscissas of absolute convergence if $F(s_1,\ldots,s_r)$ converges absolutely in the region $\Re s_1 > \sigma_1, \ldots, \Re s_r > \sigma_r$.

\begin{lemma}\label{l:balazard}
Let $f$ be a symmetric arithmetic function of $r$ variables and $(\sigma_a,\ldots,\sigma_a)$ are abscissas of absolute convergence of the associated Dirichlet series $F(s_1,\ldots,s_r)$. Define
\begin{equation}\label{eq:f-heart}
F_r^\heartsuit(\sigma, x, T) :=
\sum_{n_1,\ldots,n_r=1}^\infty {|f(n_1,\ldots,n_r)| (n_1\cdots n_r)^{-\sigma} \over \min_{j=1,\ldots,r} (T|\log(x/n_j)| + 1)}.
\end{equation}
and let
\begin{equation}\label{eq:sum-prime}
\suma_{n_1,\ldots,n_r \le x} f(n_1,\ldots,n_r) := \sum_{n_1,\ldots,n_r \le x} f(n_1,\ldots,n_r) h(x/n_1) \cdots h(x/n_r),
\end{equation}
where $h(y)=0$ for $0<y<1$, $h(1)=1/2$ and $h(y)=1$ otherwise.

For $x\ge2$, $T\ge2$, $\sigma\le\sigma_a$, $\delta>0$, $\kappa=\sigma_a - \sigma + \delta/\log x$, $1=N_1 \hm\le \cdots\le N_r$, $1=M_1\le\cdots\le M_r$ and $N_0:=N_1+\cdots+N_r$ we have
\begin{multline}\label{eq:perron}
\Biggl|
\suma_{n_1,\ldots,n_r \le x} {f(n_1,\ldots,n_r) \over (n_1\cdots n_r)^s}
-{}
\\
-
{1\over(2\pi i)^r}
\!\!\!
\int\limits_{N_1\kappa-iM_1T}^{N_1\kappa+iM_1T}
\!\!\!
\cdots
\!\!\!
\int\limits_{N_r\kappa-iM_rT}^{N_r\kappa+iM_rT}
\!\!\!
F(s+w_1,\ldots,s+w_r) x^{w_1+\cdots+w_r}
{ {d w_1\cdots d w_r \over w_1 \cdots w_r}}
\Biggr|
\ll{}
\\
\ll
x^{N_0(\sigma_a-\sigma)}
F_r^\heartsuit(\sigma_a+\delta/\log x, x, T).
\end{multline}
\end{lemma}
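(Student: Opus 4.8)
The plan is to reduce the multidimensional statement to the classical one-dimensional effective Perron formula applied iteratively, one variable at a time, and then control the accumulated error. First I would recall the standard truncated Perron formula: for a Dirichlet series $G(w) = \sum_n g(n) n^{-w}$ absolutely convergent for $\Re w > \sigma_0$, and for $\kappa > \sigma_0$, one has
$$
\suma_{n\le x} g(n) n^{-s} = \frac1{2\pi i}\int_{\kappa-iT}^{\kappa+iT} G(s+w) x^w \frac{dw}{w} + O\Biggl( x^{\kappa-\sigma}\sum_n \frac{|g(n)| n^{-\sigma}}{T|\log(x/n)|+1}\Biggr),
$$
where the averaged boundary term $h$ appears precisely because this is the symmetric form of Perron's formula (this is where $h(1)=1/2$ enters). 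I would apply this first in the variable $n_r$, holding $n_1,\dots,n_{r-1}$ fixed, with the inner Dirichlet series being $\sum_{n_r} f(n_1,\dots,n_r) n_r^{-s-w}$ evaluated at the appropriate shifted argument, using contour abscissa $N_r\kappa$ and height $M_r T$. The choice $\kappa = \sigma_a - \sigma + \delta/\log x$ makes $s+\kappa$ lie at real part $\sigma_a + \delta/\log x$, safely inside the region of absolute convergence, and the factor $x^{\delta/\log x} \asymp 1$ is harmless.

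Next I would iterate: after pulling out the $n_r$-integral, the remaining sum over $n_1,\dots,n_{r-1}$ of $\frac1{2\pi i}\int F(\cdots)x^{w_r}\,dw_r/w_r$ is itself a Dirichlet series in $n_1,\dots,n_{r-1}$ (for each fixed $w_r$ on the contour), to which I apply Perron again in $n_{r-1}$, then $n_{r-2}$, and so on down to $n_1$. At the $j$-th stage the contour has real part $N_j\kappa$; since $1 = N_1 \le \cdots \le N_r$, moving inward to the right in later variables keeps every shifted argument to the right of $\sigma_a$, which is what legitimizes interchanging the sum with the nested integrals and guarantees absolute convergence throughout. After $r$ steps the main term is exactly the $r$-fold integral in~\eqref{eq:perron}, with integration limits $N_j\kappa \pm i M_j T$ as stated.

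The error analysis is the step I expect to be the main obstacle, because the errors from the $r$ stages multiply and must be repackaged into the single clean bound $x^{N_0(\sigma_a-\sigma)} F_r^\heartsuit(\sigma_a+\delta/\log x, x, T)$. The error from the $j$-th application of Perron carries a factor $x^{N_j\kappa}$ (roughly $x^{N_j(\sigma_a-\sigma)}$ up to the $x^{O(1/\log x)}$ factor) times a sum involving $\bigl(T|\log(x/n_j)|+1\bigr)^{-1}$; when all $r$ stages are combined the powers of $x$ add to $x^{(N_1+\cdots+N_r)(\sigma_a-\sigma)} = x^{N_0(\sigma_a-\sigma)}$. The subtle point is that the product of the $r$ denominators $\prod_j \bigl(M_jT|\log(x/n_j)|+1\bigr)^{-1}$ has to be dominated by $\min_j \bigl(T|\log(x/n_j)|+1\bigr)^{-1}$ as in~\eqref{eq:f-heart}; this follows because each factor is $\le 1$ (as $M_j T \ge 1$), so the product is bounded by any single factor, in particular by the smallest, and the $M_j \ge 1$ only help. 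One must also verify that, when estimating the error at stage $j$, the nested integrals over $w_{j+1},\dots,w_r$ already performed contribute only $O(1)$ after being absorbed — this uses that on those contours $|F|$ is bounded (absolute convergence) and $\int |dw/w|$ over a segment of length $2M T$ at real part $N\kappa \asymp 1/\log x$ is $O(\log(MT\log x))$, a factor that gets swallowed by the freedom in $\delta$ and $\eps$. Assembling these pieces and bounding the terms with $n_j = x$ separately (handled by the $h(1)=1/2$ convention, contributing to the $+1$ in the denominator) yields~\eqref{eq:perron}.
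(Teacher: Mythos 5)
First, a point of comparison: the paper does not actually prove Lemma~\ref{l:balazard} --- it is quoted verbatim from Balazard, Naimi and P\'etermann \cite[Prop.~6]{balazard2008} --- so your argument is necessarily independent of the text and has to be judged on its own merits.

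Judged that way, there is a genuine gap in the error analysis of your iterative scheme. The error created at stage $j$ (the Perron application in $n_j$) sits inside the contour integrals over $w_{j+1},\dots,w_r$ already introduced, and after taking absolute values each of those integrals contributes a factor $\int_{-M_lT}^{M_lT} x^{N_l\kappa}\,|N_l\kappa+iv|^{-1}\,dv \asymp x^{N_l\kappa}\log\bigl(M_lT/(N_l\kappa)\bigr)$, which is $\asymp x^{N_l\kappa}\log(T\log x)$ when $\sigma=\sigma_a$ and $\kappa=\delta/\log x$. So your claim that the nested integrals ``contribute only $O(1)$'' is not correct: the powers of $x$ do recombine into $x^{N_0(\sigma_a-\sigma)}$ as you say, but an extra factor of order $\bigl(\log(T\log x)\bigr)^{r-1}$ remains, and it cannot be ``swallowed by the freedom in $\delta$ and $\eps$'': the lemma has no $\eps$-slack, and varying $\delta$ only changes the implied constant (by $e^{N_0\delta}$), never a logarithm. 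Thus your route proves a weaker bound than \eqref{eq:perron} (incidentally sufficient for this paper's application, where $r=2$ and an $x^\eps$ is carried anyway, but not for the lemma as stated). The standard way to avoid the loss --- in essence what Balazard--Naimi--P\'etermann do --- is not to iterate but to work with the product of truncated kernels: set $K_j(y)=\frac{1}{2\pi i}\int_{N_j\kappa-iM_jT}^{N_j\kappa+iM_jT}y^{w}\,\frac{dw}{w}$, interchange the absolutely convergent $r$-fold sum with the integrals so that the multiple integral equals $\sum_{n_1,\dots,n_r}f(n_1,\dots,n_r)(n_1\cdots n_r)^{-s}\prod_jK_j(x/n_j)$, and then use the uniform, log-free bounds $|K_j(y)|\ll y^{N_j\kappa}$, $|h(y)|\le y^{N_j\kappa}$ and $|K_j(y)-h(y)|\ll y^{N_j\kappa}\bigl(1+M_jT|\log y|\bigr)^{-1}$ together with the telescoping identity $\prod_jh-\prod_jK_j=\sum_k\bigl(\prod_{j<k}h\bigr)(h-K_k)\bigl(\prod_{j>k}K_j\bigr)$; since $M_k\ge1$, each resulting denominator is at least the minimum in \eqref{eq:f-heart}, and \eqref{eq:perron} follows without logarithms. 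Two smaller slips: the one-dimensional Perron error you quote should read $x^{\kappa}\sum_n|g(n)|n^{-\sigma-\kappa}\bigl(T|\log(x/n)|+1\bigr)^{-1}$ (with $n^{-\sigma}$ as written the series need not even converge), and your worry about dominating a product of $r$ denominators is moot --- in a sequential scheme the stage errors add, each carrying a single factor $\bigl(M_jT|\log(x/n_j)|+1\bigr)^{-1}$, which is trivially at most the reciprocal of the minimum.
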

\begin{proof}
This is a result of Balazard, Naimi and Pétermann \cite[Prop.~6]{balazard2008}.
\end{proof}

\begin{lemma}\label{l:log-sum}
Let $f(t)\ge 0$. If
$$ \int_1^T f(t) \, dt \ll g(T), $$
where $g(T) = T^\alpha \log^\beta T$, $\alpha\ge 1$,
then
\begin{equation*}
I(T):= \int_1^T {f(t)\over t} dt \ll
\left\{ \begin{matrix}
\log^{\beta+1} T & \text{if } \alpha=1, \\
T^{\alpha-1} \log^{\beta} T & \text{if } \alpha>1.
\end{matrix} \right.
\end{equation*}
\end{lemma}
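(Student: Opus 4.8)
The plan is a routine partial summation (Abel summation in integral form), exploiting the non-negativity of $f$. Set $G(T):=\int_1^T f(t)\,dt$, so the hypothesis reads $G(T)\ll g(T)$, and since $f\ge0$ the function $G$ is non-decreasing with $G(1)=0$. I would start from the elementary identity
$$
\frac1t = \frac1T + \int_t^T \frac{du}{u^2}\qquad(1\le t\le T),
$$
insert it into the definition of $I(T)$, and obtain
$$
I(T) = \frac1T\int_1^T f(t)\,dt + \int_1^T\!\!\int_t^T \frac{f(t)}{u^2}\,du\,dt.
$$
The double integrand is non-negative, so Tonelli's theorem permits interchanging the order of integration: the region $\{1\le t\le u\le T\}$ is swept by letting $t$ run over $[1,u]$ for each fixed $u\in[1,T]$, which gives
$$
I(T) = \frac{G(T)}{T} + \int_1^T \frac{G(u)}{u^2}\,du \ll \frac{g(T)}{T} + \int_1^T \frac{g(u)}{u^2}\,du,
$$
where the last step uses $G\ll g$ together with positivity of the integrand.

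It then remains to estimate the two pieces with $g(u)=u^\alpha\log^\beta u$. The boundary term is $g(T)/T=T^{\alpha-1}\log^\beta T$. For the integral, bounding $\log^\beta u\le\log^\beta T$ on $[1,T]$ (here one uses $\beta\ge0$, as in all applications below) yields
$$
\int_1^T \frac{g(u)}{u^2}\,du \le \log^\beta T\int_1^T u^{\alpha-2}\,du,
$$
and $\int_1^T u^{\alpha-2}\,du$ equals $\log T$ when $\alpha=1$ and is $\ll T^{\alpha-1}$ when $\alpha>1$. Hence this integral is $\ll\log^{\beta+1}T$ in the first case and $\ll T^{\alpha-1}\log^\beta T$ in the second; in each case it dominates (or matches) the boundary term, and the stated bounds follow.

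There is no genuine obstacle here. The only steps deserving a word of care are the interchange of integrations — legitimate with no regularity assumption on $f$ precisely because $f\ge0$ — and the tacit requirement $\beta\ge0$, which is what makes the logarithmic factor behave as claimed in the case $\alpha=1$.
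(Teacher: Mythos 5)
Your argument is correct, but it follows a different (equally elementary) route from the paper. The paper proves the lemma by a dyadic decomposition: it splits $[1,T]$ into the blocks $[T/2^{k+1},T/2^k]$, bounds $1/t$ by $2^{k+1}/T$ on each block, applies the hypothesis at the scales $T/2^k$, and sums the resulting series $\sum_k g(T/2^k)\,2^{k+1}/T$ (plus a bounded term $g(2)$ for the initial range). You instead use partial summation in integral form: the exact identity $I(T)=G(T)/T+\int_1^T G(u)u^{-2}\,du$ obtained from $1/t=1/T+\int_t^T u^{-2}\,du$ and Tonelli, followed by $G\ll g$ and a direct evaluation of $\int_1^T u^{\alpha-2}\log^\beta u\,du$. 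Your version buys an exact formula with an explicit boundary term and no summation over blocks, at the cost of justifying the interchange of integrals (harmless, as you note, since $f\ge0$); the paper's version needs no interchange and works only with the hypothesis at finitely many scales, at the cost of summing the dyadic series. Both proofs share the same small caveats, which you handle at least as carefully as the paper does: the hypothesis $G(u)\ll g(u)$ is only asymptotic, so the bottom of the range contributes a harmless $O(1)$ (the paper's $g(2)$ term), and the step $\log^\beta u\le\log^\beta T$ tacitly requires $\beta\ge0$ — a restriction the paper leaves implicit (and indeed the lemma can fail for sufficiently negative $\beta$), but which holds in its only application, where $\beta=0$.
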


\begin{proof}
Let us divide the interval of integration into parts:
\begin{multline*}
I(T)
\le
\sum_{k=0}^{\lfloor \log_2 T \rfloor - 1}
\int_{T/2^{k+1}}^{T/2^k} {f(t)\over t} dt
+g(2)
< \\ <
\sum_{k=0}^{\log_2 T} {1\over T/2^{k+1}}
\int_1^{T/2^k}
f(t) \,dt + g(2)
\ll
\sum_{k=0}^{\lfloor \log_2 T \rfloor - 1}
{g(T/2^{k})\over T/2^{k+1}}.
\end{multline*}
Now the lemma's statement follows from elementary estimates.
\end{proof}

\begin{lemma}\label{l:phragmen}
Let~$\eta>0$ be arbitrarily small. Then for growing $|t|\ge3$
\begin{equation}\label{eq:convexity}
\zeta(s) \ll \begin{cases}
|t|^{1/2 - (1-2\mu(1/2))\sigma}, & \sigma\in[0, 1/2],
\\
|t|^{2\mu(1/2)(1-\sigma)} , & \sigma\in[1/2, 1-\eta], \\
|t|^{2\mu(1/2)(1-\sigma)} \log^{2/3} |t| , & \sigma\in[1-\eta, 1], \\
\log^{2/3} |t|, & \sigma\in[1,1+\eta], \\
1, & \sigma\ge1+\eta.
\end{cases}
\end{equation}
\end{lemma}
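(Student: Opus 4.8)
The plan is to derive the convexity-type bound for $\zeta(s)$ from the Phragmén--Lindelöf principle, interpolating between the known bounds on the two vertical lines $\sigma = 1/2$ and $\sigma = 1+\eta$. The definition of $\mu(\sigma)$ gives $\zeta(1/2+it) \ll |t|^{\mu(1/2)+\eps}$, and on the line $\sigma = 1+\eta$ the zeta-function is bounded by a constant (indeed by $\zeta(1+\eta)$), while just to the right of $\sigma = 1$ one has the classical $\zeta(1+it) \ll \log^{2/3}|t|$ from the Vinogradov--Korobov-type zero-free region estimates. Since $\mu(\sigma)$ is a convex, non-increasing function of $\sigma$ with $\mu(\sigma) = 0$ for $\sigma \ge 1$, the Phragmén--Lindelöf theorem applied to $\zeta(s)$ on the strip $1/2 \le \sigma \le 1+\eta$ yields, for $\sigma \in [1/2, 1]$, the linear-in-$\sigma$ bound $\zeta(\sigma+it) \ll |t|^{2\mu(1/2)(1-\sigma)+\eps}$; this is exactly the second line of~\eqref{eq:convexity}, with the $\eps$ absorbed since on any closed subinterval $[1/2, 1-\eta]$ the implied interpolation is clean.

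Next I would handle the strip $\sigma \in [1-\eta, 1]$ near the line $\sigma = 1$, where the $\log^{2/3}$ factor appears. Here one runs Phragmén--Lindelöf on the narrow strip between $\sigma = 1-\eta$ (using the bound just obtained there) and $\sigma = 1+\eta$ (using $\zeta \ll \log^{2/3}|t|$, which in fact holds for all $\sigma \ge 1$), and the interpolation produces the factor $|t|^{2\mu(1/2)(1-\sigma)}\log^{2/3}|t|$ after checking that the logarithmic factor is carried through the maximum-modulus argument. For $\sigma \in [1, 1+\eta]$ the bound $\zeta(s) \ll \log^{2/3}|t|$ is the standard estimate in (and slightly inside) the zero-free region, and for $\sigma \ge 1+\eta$ absolute convergence of the Dirichlet series gives $\zeta(s) \ll 1$ trivially.

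Finally, for the left strip $\sigma \in [0, 1/2]$, I would invoke the functional equation $\zeta(s) = \chi(s)\zeta(1-s)$ together with Stirling's formula, which gives $|\chi(\sigma+it)| \asymp |t|^{1/2-\sigma}$ for $|t|$ large; combined with the bound $\zeta(1-\sigma+it) \ll |t|^{\mu(1-\sigma)+\eps} \ll |t|^{2\mu(1/2)(1-(1-\sigma))+\eps} = |t|^{2\mu(1/2)\sigma+\eps}$ valid on $1-\sigma \in [1/2,1]$, this produces $\zeta(\sigma+it) \ll |t|^{1/2-\sigma+2\mu(1/2)\sigma+\eps} = |t|^{1/2-(1-2\mu(1/2))\sigma+\eps}$, matching the first line. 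The main obstacle, such as it is, is being careful about where the $\eps$'s can legitimately be dropped: on the \emph{closed} subinterval $[1/2, 1-\eta]$ and below, the Phragmén--Lindelöf interpolation between endpoint bounds of the form $|t|^{\mu(1/2)+\eps}$ and $|t|^{0+\eps}$ would naively still leave an $\eps$, so one must argue (as is standard) that $\mu$ is genuinely convex and the strict interiority of $\sigma$ lets the $\eps$ be absorbed into the exponent; this bookkeeping, rather than any deep input, is the only delicate point, and it is precisely why the statement separates the clean interval $[1/2,1-\eta]$ from the $\log$-contaminated $[1-\eta,1]$.
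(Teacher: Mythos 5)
Your route is the same one the paper takes: its proof is just a pointer to Titchmarsh/Ivi\'c, and what you spell out --- Phragm\'en--Lindel\"of interpolation between the $\sigma=1/2$ and $\sigma=1$ lines, the Vinogradov--Korobov bound $\zeta(1+it)\ll\log^{2/3}|t|$ near and to the right of $\sigma=1$, and the functional equation $\zeta(s)=\chi(s)\zeta(1-s)$ with $|\chi(\sigma+it)|\asymp|t|^{1/2-\sigma}$ on $[0,1/2]$ --- is exactly the standard argument being cited, and the exponents you obtain match \eqref{eq:convexity}.

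One caveat on the point you yourself flag as delicate: your mechanism for discarding the $\eps$ does not work as stated. The definition of $\mu(1/2)$ as a $\limsup$ only yields $\zeta(1/2+it)\ll_\eps|t|^{\mu(1/2)+\eps}$, and Phragm\'en--Lindel\"of then gives an exponent exceeding $2\mu(1/2)(1-\sigma)$ by roughly $2\eps(1-\sigma)$; this additive surplus is not removed by taking $\sigma$ in a closed subinterval, nor by convexity of $\mu$ (which need not be strict). The clean, $\eps$-free form of \eqref{eq:convexity} is legitimate only if $\mu(1/2)$ is read as a concrete admissible exponent for which an honest bound on the critical line is known (e.g.\ $1/6$, which is what the paper actually substitutes later via \eqref{eq:mu-rough}), or if one tolerates stray $|t|^\eps$ or logarithmic factors --- a blemish shared by the lemma's own statement (note that already at $\sigma=0$ the functional equation forces a $\log^{2/3}|t|$ factor, since $\zeta(1+it)$ is unbounded). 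In the context of the paper this is harmless, because $x^\eps$-type factors are discarded throughout, but the justification should be ``choose an admissible exponent / keep an $\eps$'' rather than ``interiority absorbs it.''
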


\begin{proof}
Estimates follow from Phragmén---Lindelöf principle and estimates of~$\zeta(s)$ at $\sigma=0, 1/2, 1$.
See Titchmarsh~\cite[Ch.~5]{titchmarsh1986} or Ivić~\cite[Ch.~7.5]{ivic2003} for details.
\end{proof}

\begin{lemma}\label{l:zeta-square}
$$
\int_1^T \bigl| \zeta(\sigma+it) \bigr|^2 dt \ll T,
\qquad
1/2 < \sigma < 1.
$$
\end{lemma}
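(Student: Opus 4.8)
The plan is to reduce the claim to a mean value estimate for a Dirichlet polynomial, which is the classical route to the second moment of $\zeta$ off the critical line. First I would invoke the standard Euler---Maclaurin approximation to the zeta-function (e.g.\ Titchmarsh~\cite[Thm.~4.11]{titchmarsh1986}), $\zeta(s) = \sum_{n\le x} n^{-s} - x^{1-s}/(1-s) + O(x^{-\sigma})$, valid uniformly for $\sigma\ge\sigma_0>0$ and $|t|\ll x$. Since I want a Dirichlet polynomial of \emph{fixed} length on each piece, I would cut the range $[1,T]$ into dyadic blocks $t\in[T_1/2,T_1]$ and take $x=T_1$ on each: there $|x^{1-s}/(1-s)|\ll T_1^{1-\sigma}/T_1 = T_1^{-\sigma}$, so $|\zeta(\sigma+it)|^2 \ll |\sum_{n\le T_1} n^{-\sigma-it}|^2 + T_1^{-2\sigma}$ on that block.

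Next I would integrate over $[0,T_1]$ and apply the mean value theorem for Dirichlet polynomials (Montgomery---Vaughan; alternatively an elementary treatment of the off-diagonal terms via $\int_0^{T_1} (n/m)^{it}\,dt \ll |\log(n/m)|^{-1}$), obtaining $\int_0^{T_1} |\sum_{n\le T_1} n^{-\sigma-it}|^2\,dt \ll T_1\sum_n n^{-2\sigma} + \sum_{n\le T_1} n^{1-2\sigma}$. Here $2\sigma>1$ makes the first series converge, while $1-2\sigma\in(-1,0)$ makes the second sum $\ll T_1^{2-2\sigma}$; since $\sigma>1/2$ strictly, $2-2\sigma<1$ and this is $\ll T_1$. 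Together with the contribution $T_1\cdot T_1^{-2\sigma}$ of the approximation error, this yields $\int_{T_1/2}^{T_1} |\zeta(\sigma+it)|^2\,dt \ll T_1$.

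Finally I would sum over the dyadic ranges $T_1 = T, T/2, T/4, \ldots$ down to a bounded initial segment $t\in[1,2]$ (on which $\zeta(\sigma+it)$ is bounded, being evaluated on a compact set away from the pole $s=1$), getting $\int_1^T |\zeta(\sigma+it)|^2\,dt \ll \sum_{k\ge0} T\,2^{-k} \ll T$, which is the claim. This dyadic step is exactly in the spirit of Lemma~\ref{l:log-sum}.

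I do not expect a genuine obstacle here: the statement is classical --- indeed one has the sharper $\int_0^T |\zeta(\sigma+it)|^2\,dt = \zeta(2\sigma)T + O(T^{2-2\sigma}\log T)$ for $1/2<\sigma<1$ (Titchmarsh~\cite[Ch.~7]{titchmarsh1986}) --- and could simply be quoted. The only mildly delicate point is that the Dirichlet sum approximating $\zeta$ has a $t$-dependent length, which is precisely what the dyadic splitting handles; one must also check that the $T_1^{2-2\sigma}$ term genuinely loses to $T_1$ (it does, because $\sigma>1/2$ is strict) and that the implied constants stay uniform across the blocks.
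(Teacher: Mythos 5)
Your proposal is correct, but it is worth noting that the paper does not actually prove this lemma: its ``proof'' is a bare citation of Iv\'c~\cite[(1.76)]{ivic2003}, which records the classical mean value theorem $\int_1^T|\zeta(\sigma+it)|^2\,dt=\zeta(2\sigma)T+O\bigl(T^{2-2\sigma}\log T\bigr)$ for fixed $\sigma\in(1/2,1)$ --- precisely the sharper statement you mention at the end, and indeed quoting it would have sufficed, since the lemma is only used with $\sigma=2/3$ in~\eqref{eq:holder} and only an upper bound is needed. Your route is the standard self-contained derivation behind that reference, and the steps check out: the approximate formula $\zeta(s)=\sum_{n\le x}n^{-s}-x^{1-s}/(1-s)+O(x^{-\sigma})$ with $x=T_1$ is legitimate on the dyadic block $t\in[T_1/2,T_1]$ (there $|1-s|\gg T_1$, so the secondary term is $\ll T_1^{-\sigma}$), the Montgomery---Vaughan mean value theorem gives the diagonal term $T_1\sum_n n^{-2\sigma}\ll T_1$ because $2\sigma>1$, the off-diagonal term $\sum_{n\le T_1}n^{1-2\sigma}\ll T_1^{2-2\sigma}\ll T_1$ because $\sigma>1/2$ is fixed, the approximation error contributes only $O(T_1^{1-2\sigma})$, and the dyadic summation (in the same spirit as Lemma~\ref{l:log-sum}) assembles the blocks with constants depending on $\sigma$ alone. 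So the two approaches differ only in economy: the paper outsources the estimate to a standard reference, while your argument makes the lemma self-contained at the cost of importing the approximate formula for $\zeta$ and a Dirichlet-polynomial mean value theorem; either is acceptable here.
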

\begin{proof}
See Ivić~\cite[(1.76)]{ivic2003}.
\end{proof}

\section{Reduction to complex integration}

Applying Lemma~\ref{l:balazard} with $r=2$, $f(n_1,n_2)=\taa(n_1 n_2)$, $\sigma=s=0$, $\sigma_a=1$, $N_1\hm=N_2\hm=M_1=M_2=1$, $\delta=1$, $\log T \asymp \log x$ and writing $(m,n,z,w,c)$ instead of~$(n_1,n_2,w_1,w_2,\kappa)$ for convenience we deduce from~\eqref{eq:perron} that
\begin{equation}\label{eq:perron-tau12}
\suma_{m,n\le x} \taa(mn) =
{1\over(2\pi i)^2} \iint\limits_{[c-iT,c+iT]^2} F(z,w) {x^{z+w} \over zw} dz \, dw
+
O\left(
x^{2}
F_2^\heartsuit(c, x, T)
\right),
\end{equation}
where $c=1+1/\log x$ and
\begin{equation}\label{eq:tau12-series}
F(z,w) = \sum_{m,n=1}^\infty {\taa(mn) \over m^z n^w},
\qquad
\Re z, \Re w > 1.
\end{equation}

By~\eqref{eq:f-heart} for non-integer $x$
\begin{multline}\label{eq:sigma1234}
T F_2^\heartsuit(c, x, T)
\ll
\sum_{m,n} {\taa(mn) \over (mn)^c \min\bigl( |\log {x\over n}|, |\log {x\over m}|  \bigr)}
\ll
\sum_{\substack{|\log {x\over n}| \ge 1 \\ |\log {x\over m}| \ge 1}}
	{\taa(mn) \over (mn)^c}
+ \\ +
\sum_{\substack{|\log {x\over n}| \le 1 \\ |\log {x\over m}| \ge 1}}
{\taa(mn) \over (mn)^c |\log {x\over n}|}
+
\sum_{\substack{|\log {x\over n}| \le 1 \\ |\log {x\over m}| \le 1}}
{\taa(mn) \over (mn)^c \min\bigl( |\log {x\over n}|, |\log {x\over m}|  \bigr)}
:={} \\
:= \Sigma_1+\Sigma_2+\Sigma_3.
\end{multline}

We have $\Sigma_1 \ll \sum_{m,n=1}^\infty {\taa(mn) / (mn)^{c}} \hm= F(c,c)$ and we will show below in~\eqref{eq:tau12-zeta-product} that
\begin{equation}\label{eq:sigma1-estimate}
F(c,c) \ll {1\over (c-1)^2} = \log^2 x.
\end{equation}

Further, for $x$ such that $|\log {x\over n}| \le 1$ we have $ |\log {x\over n}| \ge c |x-n| / x $ for~$c \hm= 1/(e-1)$. Then
$$
\Sigma_2 \ll \sum_{x/e \le n \le xe} \sum_m {\taa(mn) x \over (mn)^c |x-n|}.
$$
Note that $\taa(mn) \le \tau(mn) \le \tau(m) \tau(n)$, because $\tau$ is completely submultiplicative. Thus
$$
\Sigma_2 \ll x \sum_{x/e \le n \le xe} {\tau(n) \over n^c |x-n|} \sum_m {\tau(m) \over m^c}.
$$
Here
$$
\sum_{m=1}^\infty \tau(m) m^{-c} = \zeta^2(c) \ll (c-1)^{-2} = \log^2 x.
$$
Let $M(y) = \max_{n\le y} \tau(n)$. We have
$$
\Sigma_2 \ll x M(xe) \log^2 x \sum_{x/e \le n \le xe} {1\over n^c |x-n|},
$$
where the last sum is $\ll x^{-c} \log x \ll x^{-1} \log x$, so finally
\begin{equation}\label{eq:sigma2-estimate}
\Sigma_2 \ll M(xe) \log^3 x.
\end{equation}

Now consider $\Sigma_3$. Defining $M_{1,2}(y) = \max_{n\le y} \taa(n)$ we obtain
\begin{multline}\label{eq:sigma3-estimate}
\Sigma_3 \ll
\sum_{x/e \le n \le m \le xe}
{\taa(mn) x \over (mn)^c \min\bigl( |x-n|, |x-m| \bigr)}
\ll \\ \ll
{x M_{1,2}(x^2 e^2) \over x^{2c}}
\sum_{x/e \le n \le m \le xe}
\max\left( {|x-n|^{-1}}, {|x-m|^{-1}} \right)
\ll
M_{1,2}(x^2 e^2) \log x.
\end{multline}

Standard estimates \cite[Th.~315]{hardy2008} give
$M_{1,2}(y) \le M(y) \ll y^\eps$,
so substituting \eqref{eq:sigma1-estimate}, \eqref{eq:sigma2-estimate} and~\eqref{eq:sigma3-estimate} into \eqref{eq:sigma1234} we obtain
\begin{equation}\label{eq:f2-heart-estimate}
F_2^\heartsuit(c, x, T) \ll T^{-1} \bigl( M(xe) \log^3 x + M_{1,2}(x^2 e^2) \log x \bigr) \ll T^{-1} x^\eps.
\end{equation}

\medskip

Note also that by definition~\eqref{eq:sum-prime}
\begin{equation}\label{eq:sum-vs-suma}
\biggl|
\sum_{m,n\le x} \taa(mn) - \suma_{m,n\le x} \taa(mn)
\biggr|
\ll
\sum_{n\le x} \taa(\lfloor x \rfloor n) \ll M(x^2) x.
\end{equation}
Combining~\eqref{eq:perron-tau12}, \eqref{eq:f2-heart-estimate} and~\eqref{eq:sum-vs-suma} we get
\begin{multline}\label{eq:perron-tau12-rectified}
\sum_{m,n\le x} \taa(mn)
= {1\over(2\pi i)^2} \iint\limits_{[c-iT,c+iT]^2} F(z,w) {x^{z+w} \over zw} dz \, dw
+{} \\ +
O\bigl( x^{1+\eps} + T^{-1} x^{2+\eps} \bigr).
\end{multline}

\section{Double Dirichlet series for \texorpdfstring{$\taa$}{τ₁₂} }

Let us return to~\eqref{eq:tau12-series} and extract a product of zeta-functions from $F(z,w)$.
Define
\begin{equation}\label{eq:fxy-def}
f(x,y) = \sum_{a,b=0}^\infty \taa(p^{a+b}) x^a y^b,
\qquad
|x|, |y| < 1.
\end{equation}
Using identity
$$
\taa(p^a) - \taa(p^{a-1}) - \taa(p^{a-2}) + \taa(p^{a-3}) = 0
$$
multiply both sides of~\eqref{eq:fxy-def} by $(1-x)(1-x^2)$:
\begin{multline*}
(1-x)(1-x^2) f(x,y)
= \\ =
\sum_{a=3}^\infty \sum_{b=0}^\infty \bigl( \taa(p^{a+b}) - \taa(p^{a+b-1}) - \taa(p^{a+b-2}) + \taa(p^{a+b-3}) \bigr) x^a y^b
+{} \\ +
\sum_{b=0}^\infty y^b
\left(
(1-x-x^2)  \taa(p^b)
+ (1-x)  \taa(p^{b+1}) x
+ \taa(p^{b+2}) x^2
\right)
={} \\ =
\sum_{b=0}^\infty y^b
\left(
(1-x-x^2)  \taa(p^b)
+ (x-x^2)  \taa(p^{b+1}) x
+ x^2 \taa(p^{b+2})
\right)
\end{multline*}
and further
\begin{multline*}
(1-x)(1-x^2)(1-y)(1-y^2)f(x,y)
={} \\ =
(1-x-x^2) \bigl( (1-y-y^2) + (1-y)y + 2y^2 \bigr)
+{} \\ +
(x-x^2) \bigl( (1-y-y^2) + 2(1-y)y + 2y^2 \bigr)
+{} \\ +
x^2 \bigl( 2(1-y-y^2) + 2(1-y)y + 3y^2 \bigr)
={} \\ =
1+xy-x^2y-xy^2,
\end{multline*}
which induces
\begin{multline}\label{eq:fxy-closed-expression}
f(x,y) = {1+xy-x^2y-xy^2 \over (1-x)(1-x^2)(1-y)(1-y^2)}
={} \\ =
{1 - x^2 y - x y^2 - x^2 y^2 + x^3 y^2 + x^2 y^3 \over (1-x)(1-x^2)(1-y)(1-y^2)(1-xy)}.
\end{multline}
Representation~\eqref{eq:fxy-closed-expression} immediately implies that
\begin{multline}\label{eq:tau12-zeta-product}
F(z,w) = \prod_p f(p^{-z}, p^{-w})
= {\zeta(z) \zeta(2z) \zeta(w) \zeta(2w) \zeta(z+w) G(z,w)}
=\\=
{\zeta(z) \zeta(2z) \zeta(w) \zeta(2w) \zeta(z+w)
\over
\zeta(2z+w) \zeta(2w+z) } H(z,w),
\end{multline}
where series $H(z,w)$ converges absolutely in the region $\Re(2z+2w) \hm> 1 $. De\-fi\-ni\-te\-ly~$G(z,w)$ converges absolutely for $(z,w) \in Q := \{\Re z \hm\ge 1/3, \Re w \ge 1/3\}$.

Product of zeta-functions~\eqref{eq:tau12-zeta-product} shows that inside of the region $Q$ function~$F(z,w)$ has poles along lines $z=1$, $z=1/2$, $w=1$, $w=1/2$ and~$z+w=1$. All of them are of the first order, except poles at~$(1,1)$, $(1,1/2)$, $(1/2,1)$, which are of the second order, and a pole at~$(1/2, 1/2)$, which is of the third order.

\bigskip

Both \eqref{eq:tau-series} and \eqref{eq:tau12-zeta-product} are partial cases of a general rule, which will be stated as a lemma.

\begin{lemma}\label{l:tau1k-zeta-product}
Let $\tau_{1,k}(n) = \sum_{ab^k=n}1$. Then for $\Re z, \Re w > 1$ we have
\begin{equation}\label{eq:tau1k-zeta-product}
\sum_{m,n=1}^\infty {\tau_{1,k}(mn) \over m^z n^w}
= \zeta(z) \zeta(w)
{ \prod_{l=0}^k \zeta\bigl(lz+(k-l)w\bigr) \over
\prod_{l=1}^k \zeta\bigl(lz+(k+1-l)w\bigr) }
H_k(z,w),
\end{equation}
where the series $H_k$ converges absolutely for $\Re z, \Re w > 1/(k+2)$.
\end{lemma}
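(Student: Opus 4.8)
The plan is to carry out for general $k$ the Euler-product computation done above for $k=1$ in~\eqref{eq:tau-series} and for $k=2$ in~\eqref{eq:tau12-zeta-product}; the one new ingredient is a uniform estimate of the local factor. Since $\tau_{1,k}=\mathbf 1\star\chi_k$, with $\chi_k$ the indicator of perfect $k$-th powers, $\tau_{1,k}$ is multiplicative; the bound $\tau_{1,k}(mn)\le\tau(mn)\le\tau(m)\tau(n)$ gives absolute convergence of the left-hand side of~\eqref{eq:tau1k-zeta-product} for $\Re z,\Re w>1$, and unique factorisation yields
$$\sum_{m,n=1}^\infty\frac{\tau_{1,k}(mn)}{m^z n^w}=\prod_p f_k(p^{-z},p^{-w}),\qquad f_k(x,y):=\sum_{a,b\ge0}\tau_{1,k}(p^{a+b})\,x^a y^b .$$

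To put $f_k$ in closed form I write $\tau_{1,k}(p^{a+b})=\#\{(\alpha,\beta)\ge0:\alpha+k\beta=a+b\}$, interchange the order of summation, and evaluate the inner sum $\sum_{a+b=m}x^a y^b=(x^{m+1}-y^{m+1})/(x-y)$ with $m=\alpha+k\beta$, which after summing the two geometric series gives
\begin{multline*}
f_k(x,y)=\frac1{x-y}\Bigl(\frac{x}{(1-x)(1-x^k)}-\frac{y}{(1-y)(1-y^k)}\Bigr)\\
=\frac{P_k(x,y)}{(1-x)(1-x^k)(1-y)(1-y^k)},
\end{multline*}
where, after clearing denominators, cancelling the factor $x-y$ (it divides the numerator, the bracket vanishing at $x=y$), and applying $x^r-y^r=(x-y)\sum_{i=0}^{r-1}x^iy^{r-1-i}$ twice, one obtains the polynomial
$$P_k(x,y)=1+\sum_{j=1}^{k-1}x^j y^{k-j}-\sum_{j=1}^{k}x^j y^{k+1-j},\qquad P_k(0,0)=1$$
(which is $1-xy$ for $k=1$ and $1+xy-x^2y-xy^2$ for $k=2$). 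An equivalent route is to multiply $f_k$ by $(1-x)(1-x^k)(1-y)(1-y^k)$ and use the recurrence $\tau_{1,k}(p^m)-\tau_{1,k}(p^{m-1})-\tau_{1,k}(p^{m-k})+\tau_{1,k}(p^{m-k-1})=0$, valid for $m>k$, exactly as above for $k=2$.

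Now I extract the zeta-factors: under $\prod_p$ the four geometric denominators contribute $\zeta(z)\zeta(kz)\zeta(w)\zeta(kw)$, and $\zeta(kz)$, $\zeta(kw)$ are precisely the $l=k$ and $l=0$ members of $\prod_{l=0}^k\zeta(lz+(k-l)w)$; hence, setting
$$H_k(z,w):=\Bigl(\prod_p P_k(p^{-z},p^{-w})\Bigr)\,\frac{\prod_{l=1}^{k}\zeta(lz+(k+1-l)w)}{\prod_{l=1}^{k-1}\zeta(lz+(k-l)w)},$$
identity~\eqref{eq:tau1k-zeta-product} holds formally, and $H_k$ has the Euler factor
$$h_k(x,y):=P_k(x,y)\,\frac{\prod_{l=1}^{k-1}(1-x^l y^{k-l})}{\prod_{l=1}^{k}(1-x^l y^{k+1-l})}.$$
The heart of the matter is the claim that $h_k(x,y)-1$ consists only of monomials $x^j y^l$ with $j+l\ge2k$. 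To see this put $A:=\sum_{j=1}^{k-1}x^j y^{k-j}$ and $B:=\sum_{j=1}^{k}x^j y^{k+1-j}$, homogeneous of degrees $k$ and $k+1$, so $P_k=1+A-B$ exactly; then $\prod_{l=1}^{k-1}(1-x^l y^{k-l})=1-A+R_1$ and $\prod_{l=1}^{k}(1-x^l y^{k+1-l})^{-1}=1+B+R_2$, where $R_1$ and $R_2$ consist of monomials of degree $\ge2k$ (each being a product of at least two of the listed monomials, which have degree $\ge k$); so in $h_k=(1+A-B)(1-A+R_1)(1+B+R_2)$ the degree-$k$ contributions ($A$ against $-A$) and the degree-$(k+1)$ contributions ($-B$ against $B$) cancel, leaving only monomials of degree $\ge2k$. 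Granting this, $h_k$ is holomorphic in the polydisc $|x|,|y|<1$, hence has at most geometrically growing Taylor coefficients, and for $\Re z,\Re w>1/(k+2)$ every surviving monomial satisfies $j\Re z+l\Re w\ge(j+l)\min(\Re z,\Re w)>2k/(k+2)\ge1$; therefore $\sum_p\bigl|h_k(p^{-z},p^{-w})-1\bigr|\ll\sum_p p^{-2k\min(\Re z,\Re w)}<\infty$, so $H_k$ converges absolutely there. (This estimate is vacuous for $k=1$, but then $h_1\equiv1$ and $H_1\equiv1$.)

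The only genuine obstacle is this last step. The cancellation itself is elementary, but one has to keep careful track of the homogeneous pieces of $P_k$ against both $\zeta$-products to be sure that the telescoping annihilates exactly the monomials of degree $<2k$ — which is what fixes the abscissa of absolute convergence of $H_k$ at $1/(k+2)$.
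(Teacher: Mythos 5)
Your proof is correct, and it reaches the lemma by a more explicit route than the paper. The paper does not compute the local factor in closed form for general $k$: it works with $g(x,y)=(1-x)(1-y)f(x,y)$, evaluates its coefficients only in degrees up to $k+1$ directly from the values $\tau_{1,k}(p^a)=1$ ($a<k$), $2$ ($k\le a<2k$), matches them against the low-degree part of the zeta-product's local factor, and concludes that the remaining factor $h$ has monomials of degree at least $k+2$ — just enough for the abscissa $1/(k+2)$. You instead derive the exact rational expression $f_k=P_k(x,y)/\bigl((1-x)(1-x^k)(1-y)(1-y^k)\bigr)$ for all $k$ (generalizing the paper's explicit $k=2$ computation~\eqref{eq:fxy-closed-expression}), and your telescoping of $P_k=1+A-B$ against $\prod_{l=1}^{k-1}(1-x^ly^{k-l})$ and $\prod_{l=1}^{k}(1-x^ly^{k+1-l})^{-1}$ is sound: all cross terms have degree at least $2k$, the degree-$k$ and degree-$(k{+}1)$ pieces cancel, so $h_k-1$ is supported in degrees $\ge 2k$, which is stronger than the paper's $\ge k+2$ once $k\ge2$ (and you correctly dispose of $k=1$ via $h_1\equiv1$). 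You also make explicit the analytic step — holomorphy of $h_k$ in the polydisc and Cauchy-type bounds turning the degree bound into $\sum_p|h_k(p^{-z},p^{-w})-1|<\infty$ for $\Re z,\Re w>1/(k+2)$ — which the paper leaves implicit. In short: same Euler-product strategy, but your version buys a closed formula and a sharper degree (hence convergence) statement at the cost of a longer computation, while the paper's coefficient-matching is shorter and suffices for the stated region.
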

\begin{proof}
Cases $k=1$ and $k=2$ has been proven above, so we consider $k>2$ only. Let
$$
f(x,y) = \sum_{a,b=0}^\infty \tau_{1,k}(p^{a+b}) x^a y^b,
\qquad
|x|, |y| < 1.
$$
For a monomial $M$ let $[M]f(x,y)$ be a coefficient at $M$ in the series $f$.
Here
$$[x]f(x,y) \hm= [y]f(x,y) \hm= \tau_{1,k}(p) = 1,$$
so let us define
\begin{multline*}
g(x,y) = (1-x) (1-y) f(x,y)
= \\ =
\sum_{a,b=1}^\infty \bigl( \tau_{1,k}(p^{a+b}) - 2 \tau_{1,k}(p^{a+b-1}) + \tau_{1,k}(p^{a+b-2}) \bigr) x^a y^b
+ \\ +
\sum_{a=1}^\infty \bigl( \tau_{1,k}(p^{a}) - \tau_{1,k}(p^{a-1}) \bigr) (x^a + y^a) + 1.
\end{multline*}
We have
$$
\tau_{1,k}(p^a) = \begin{cases}
1, & a<k, \\
2, & k\le a<2k,
\end{cases}
$$
so one can verify that
$$
[x^a y^b] g(x,y) = \begin{cases}
 0, & a+b<k, \\
 1, & a+b=k, \\
 0, & a+b=k+1, ~ ab=0 \\
-1, & a+b=k+1, ~ ab>0.
\end{cases}
$$
Thus
$$
f(x,y)
= {1\over (1-x) (1-y)}
{ \prod_{l=1}^k (1-x^l y^{k+1-l}) \over
\prod_{l=0}^k (1-x^l y^{k-l}) }
h(x,y),
$$
where all monomials of the series $h(x,y)$ has degree at least $k+2$.
\end{proof}

\section{Path of integration and the main term}

Our aim is to translate the domain of integration in~\eqref{eq:perron-tau12-rectified} from $[c\hm-iT,c\hm+iT]^2$ till~$[b-iT,b+iT]^2$, where $b=1/3$. This is trickier than translating in the one-dimensional case, because a hyperrectangle $R$ with opposite vertices $(b\hm-iT,b-iT)$ and $(c+iT,c\hm+iT)$ has 24~two-dimensional faces. Figure~\ref{f:tesseract} contains a schematic plain projection of $R$ with 16~vertices and 32~edges marked.

\begin{figure}
\center\includegraphics{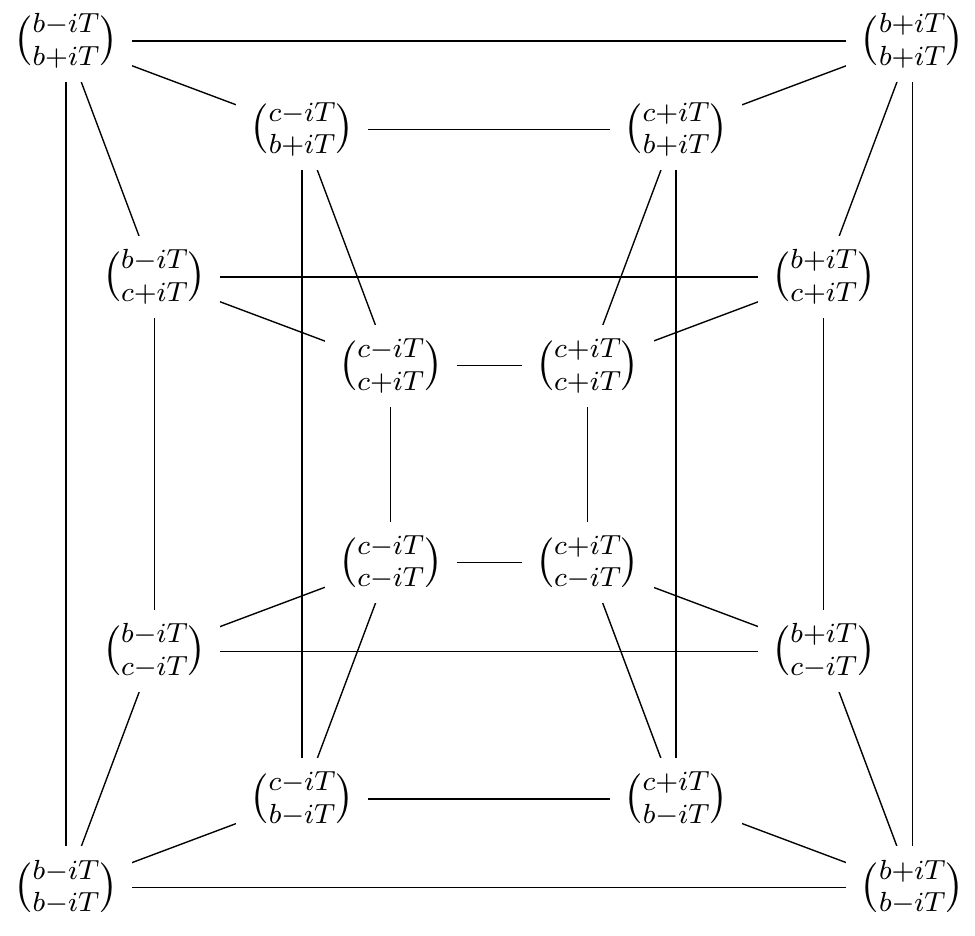}
\caption{The hyperrectangle $R$ with opposite vertices $(b-iT,b\hm-iT)$ and $(c+iT,c+iT)$}
\label{f:tesseract}
\end{figure}

Denote $L(z,w) = G(z,w) x^{z+w} z^{-1} w^{-1}$. This function has the same poles in~$R$ as~$G(z,w)$ has. Note that (on contrary with integration by one-dimensional contour) poles of the first order do not induce divergence of integrals by plane domains: e.~g., $\iint_{x^2+y^2\le1} {dx \, dy \over \sqrt{x^2+y^2}} = 2\pi < \infty$, however~$\int_{x^2\le 1} {dx\over x} = \infty$. Only poles of the second and higher orders are worth to pay attention.

Let $E(x)$ be the integral of $L(z,w)$ over all faces of $R$ except $[c\hm-iT, c+iT]^2$. By residue theorem \cite{shabat1992}
\begin{multline}\label{eq:cauchy-theorem}
{1\over(2\pi i)^2} \iint\limits_{[c-iT,c+iT]^2} L(z,w)\,dz\,dw
={} \\ =
\Bigl( \res_{z=w=1} + \res_{\scriptstyle z=1\atop \scriptstyle w=1/2} + \res_{\scriptstyle z=1/2 \atop \scriptstyle w=1} + \res_{z=w=1/2} \Bigr) L(z,w) + O(E(x)).
\end{multline}
Expanding $L(z,w)$ into Laurent series in two variables we get
\begin{gather}
\label{eq:res11}
\res_{z=w=1} L(z,w) = \zeta^3(2) G(1,1) x^2, \\
\label{eq:res12}
\res_{\scriptstyle z=1\atop \scriptstyle w=1/2} L(z,w) = \res_{\scriptstyle z=1/2\atop \scriptstyle w=1} L(z,w) = \zeta(2)\zeta(\half)\zeta(\threehalf) G(1,\half) x^{3/2}, \\
\label{eq:res22}
\res_{z=w=1/2} L(z,w) \ll x \log x.
\end{gather}
After substitution into~\eqref{eq:perron-tau12-rectified} the residue at~$(1/2,1/2)$ will be absorbed by error term, so it is enough to have only upper bound. Inserting~\eqref{eq:res11}, \eqref{eq:res12} and \eqref{eq:res22} into~\eqref{eq:cauchy-theorem} we get
\begin{equation}\label{eq:cauchy-theorem-2}
{1\over(2\pi i)^2} \iint\limits_{[c-iT,c+iT]^2} L(z,w)\,dz\,dw
=
C_1 x^2 + C_2 x^{3/2} + O(x\log x + E(x)),
\end{equation}
where
$$
C_1 = {\pi^6\over216} G(1,1),
\qquad
C_2 = {\pi^2\over3} \zeta(\half) \zeta(\threehalf) G(1,\half).
$$

Let us calculate numerical values of $C_1$ and $C_2$. Applying formal identity
$$
{F(z,w) \over \zeta(z) \zeta(w)}
=
\prod_p (1-p^{-z})(1-p^{-w}) \sum_{a,b=0}^\infty {\taa(p^{a+b}) \over p^{a+b}}
$$
at $z=w=1$ we get
\begin{equation*}\label{eq:C1-value}
C_1 = \res_{z=w=1} L(z,w)
= \prod_p (1-p^{-1})^2 \sum_{a,b=0}^\infty {\taa(p^{a+b}) \over p^{a+b}}
= 2.995\ldots
\end{equation*}
The product converges absolutely because
$$
(1-p^{-1})^2 \sum_{a,b=0}^\infty {\taa(p^{a+b}) \over p^{a+b}}
= \bigl( 1-2p^{-1}+O(p^{-2}) \bigr) \bigl( 1 + 2p^{-1}+O(p^{-2}) \bigr)
= 1 + O(p^{-2}).
$$
Similarly
$$
{F(z,w) \over \zeta(z) \zeta(w) \zeta(2z)}
=
\prod_p (1-p^{-z})(1-p^{-w})(1-p^{-2z}) \sum_{a,b=0}^\infty {\taa(p^{a+b}) \over p^{a+b}}
$$
implies
\begin{equation*}\label{eq:C2-value}
C_2 = 2
\!\!\!\!
\res_{\scriptstyle z=1\atop \scriptstyle w=1/2}
\!\!\!\!
L(z,w)
= 2 \zeta(1/2) \prod_p (1-p^{-1})^2 (1-p^{-1/2}) \sum_{a,b=0}^\infty {\taa(p^{a+b}) \over p^{a+b/2}}
= -5.404\ldots
\end{equation*}

\section{The error term}

Let us estimate $E(x)$. It was defined above to consist of integrals over~23 of~24~faces of the hyperrectangle~$R$, but due to the symmetry many of these integrals can be estimated in the same way.

In computations below we assume $ x^{1/2} \ll T \ll x$, the exact value of~$T$ will be specified later in~\eqref{eq:T-choice}.

There are 2~faces of form $[b-iT,b+iT]\times[c-iT,c+iT]$. We have
\begin{multline*}
I_1 := \int_{b-iT}^{b+iT} \int_{c-iT}^{c+iT} L(z,w)\,dz\,dw
\ll
\iint\limits_{[1,T]^2}
\zeta(b+it_1) \zeta (2b+2it_1)
\times{} \\ \times
\zeta(c+it_2) \zeta(2c+2it_2)
\zeta\bigl(b+c+i(t_1+t_2)\bigr) x^{b+c} t_1^{-1} t_2^{-1} dt_1 dt_2.
\end{multline*}
By~\eqref{eq:convexity} we can estimate
$$
\zeta(c+it_2) \zeta(2c+2it_2) \zeta\bigl(b+c+i(t_1+t_2)\bigr) \ll \log^{2/3} T \cdot 1 \cdot 1.
$$
As soon as $x^{1/\log x} \ll 1$ we have $x^{b+c} \ll x^{4/3}$. Also $\int_1^T t_2^{-1} dt_2 \ll \log T$. Thus $I_1$ can be estimated as
\begin{equation*}
I_1  \ll
x^{4/3} \log^{5/3} T
\int_1^T
\zeta(b+it) \zeta (2b+2it) t^{-1} dt.
\end{equation*}
By functional equation for $\zeta$, Lemma~\ref{l:zeta-square} and Lemma~\ref{l:log-sum}
\begin{multline}\label{eq:holder}
J := \int_1^T
\zeta(b+it) \zeta (2b+2it) t^{-1} dt
\ll
\int_1^T t^{1/6} \zeta^2(2/3+it) t^{-1} dt
\ll T^{1/6} \log T.
\end{multline}
Then
\begin{equation}\label{eq:int-b-c-b+c+}
I_1 \ll x^{4/3} T^{1/6} \log^{8/3} T.
\end{equation}

We will show below in~\eqref{eq:error-term} that integrals over other faces (and so~$E(x)$ as a whole) are less than either $I_1$ or $x^{2+\eps} T^{-1}$, so $T$ should be chosen to equalize this two magnitudes:
\begin{equation}\label{eq:T-choice}
T = x^{4/7}.
\end{equation}
Substitute it into~\eqref{eq:perron-tau12-rectified} and \eqref{eq:cauchy-theorem-2} to obtain the final error term $x^{10/7+\eps}$, which approves the statement of the Theorem~\ref{th:main-theorem}.

\medskip

From here and till the end of the section we will omit factors $\ll x^\eps$ in asymptotic estimates for the brevity: they do not influence the resulting error term.

There are 4 faces of form $[b-iT,b+iT]\times[b\pm iT, c\pm iT]$. We have
\begin{multline*}
I_2 := \int_{b-iT}^{b+iT} \int_{b+iT}^{c+iT} L(z,w)\,dz\,dw
\ll
\int_1^T \int_b^c \zeta(b+it) \zeta(2b+2it)
\times{} \\ \times
\zeta(\sigma+iT) \zeta(2\sigma+2iT) \zeta\bigl( b+\sigma+i(t+T) \bigr)
x^{b+\sigma} t^{-1} T^{-1} d\sigma\,dt
\ll{} \\ \ll
x^{1/3} J T^{-1}
\max_{\scriptstyle  \sigma\in[b,c] \atop \scriptstyle  t\in[1,T]}
\zeta(\sigma+iT) \zeta(2\sigma+2iT) \zeta\bigl(b+\sigma+i(t+T)\bigr)  x^\sigma
\ll{} \\ \ll
x^{1/3} T^{-5/6}
\max_{\sigma\in[b,c]}
\zeta(\sigma+iT) \zeta(\sigma+1/3+iT) \zeta(2\sigma+iT) x^\sigma.
\end{multline*}
Splitting $[b,c]$ into intervals $[1/3,1/2]$, $[1/2,2/3]$, $[2/3,c]$ and estimating~$\zeta(\sigma\hm+iT) \* \zeta(\sigma+1/3+iT) \* \zeta(2\sigma+iT) x^\sigma$ on each of them separately, we get
$$
I_2
\ll
x^{1/3} T^{-5/6}
( T^{\mu(1/3)+2\mu(2/3)} x^{1/2} + T^{\mu(1/2)+\mu(5/6)} x^{2/3} + T^{\mu(2/3)} x).
$$
Utilizing rough estimate $\mu(1/2)\le 1/6$ from \cite[Th. 5.5]{titchmarsh1986} we get by~\eqref{eq:convexity} that
\begin{equation}\label{eq:mu-rough}
\mu(\sigma) \le \begin{cases}
	1/2 - 2\sigma/3, & \sigma \in [0,1/2], \\
	(1-\sigma)/3, & \sigma \in [1/2,1]
	\end{cases}
\end{equation}
and
\begin{equation}\label{eq:mu1/3-mu2/3}
\mu(1/3)\le 5/18, \quad \mu(2/3)\le 1/9, \quad \mu(5/6)\le 1/18,
\end{equation}
 so
\begin{equation}\label{eq:int-b-b+b+c+}
I_2 \ll x^{1/3} T^{-5/6} (T^{1/2} x^{1/2} + T^{2/9}x^{2/3} + T^{1/9} x) \ll x^{4/3}.
\end{equation}

\medskip

There is 1 face of form $[b-iT,b+iT]^2$. Applying~\eqref{eq:mu1/3-mu2/3} we have
\begin{multline*}
I_3 := \iint\limits_{[b-iT,b+iT]^2} L(z,w)\,dz\,dw
\ll
\iint\limits_{[1,T]^2} \zeta(b+it_1) \zeta (2b+2it_1)
\times{} \\ \times
\zeta(b+it_2) \zeta(2b+2it_2) \zeta\bigl(2b+i(t_1+t_2)\bigr) x^{2b} t_1^{-1} t_2^{-1} dt_1 dt_2
\ll{} \\ \ll
x^{2/3} \iint\limits_{[1,T]^2}
t_1^{5/18+1/9-1} t_2^{5/18+1/9-1} (t_1+t_2)^{1/9} dt_1 dt_2,
\end{multline*}
which implies
\begin{equation}\label{eq:int-b-b-b+b+}
I_3 \ll
x^{2/3} T^{8/9},
\end{equation}
which is less than $x^{4/3}$ by our choice of $T$ in \eqref{eq:T-choice}.

\medskip

There are 4 faces of form $[c-iT,c+iT]\times[b\pm iT, c\pm iT]$. We have
\begin{multline}\label{eq:int-c-b+c+c+-preliminary}
I_4 := \int_{c-iT}^{c+iT} \int_{b+iT}^{c+iT} L(z,w)\,dz\,dw
\ll{} \\ \ll
\int_1^T \int_b^c \zeta(c+it) \zeta(2c+2it) \zeta(\sigma+iT) \zeta(2\sigma+2iT) \zeta\bigl( c+\sigma+i(t+T) \bigr)
\times{} \\ \times
x^{c+\sigma} t^{-1} T^{-1} d\sigma\,dt
\ll
x T^{-1} \int_b^c \zeta(\sigma+iT) \zeta(2\sigma+2iT) x^\sigma d\sigma.
\end{multline}
Here
$$
\int_b^c \zeta(\sigma+iT) \zeta(2\sigma+2iT) x^\sigma d\sigma
\ll
\max_{\sigma\in[b,c]} \zeta(\sigma+iT) \zeta(2\sigma+iT) x^\sigma.
$$
For $\sigma\in[b,1/2]$ we have
\begin{equation}\label{eq:int-c-b+c+c+-helper1}
\zeta(\sigma+iT) \zeta(2\sigma+iT) x^\sigma \ll T^{\mu(1/3)+\mu(2/3)} x^{1/2} \ll T x^{1/3}.
\end{equation}
Taking into account \eqref{eq:mu-rough} for $\sigma\in[1/2,1]$ we get
\begin{equation}\label{eq:int-c-b+c+c+-helper2}
\zeta(\sigma+iT) \zeta(2\sigma+iT) x^\sigma \ll T^{\mu(\sigma)} x^\sigma \ll x^{\mu(\sigma)+\sigma} \ll x^{(1+2\sigma)/3} \ll x.
\end{equation}
Returning to \eqref{eq:int-c-b+c+c+-preliminary} we get
\begin{equation}\label{eq:int-c-b+c+c+}
I_4
\ll x^2 T^{-1} + x^{4/3}.
\end{equation}

\medskip

There are 4 faces of form $[b\pm iT, c\pm iT]^2$. We have
\begin{multline}\label{eq:int-b+b+c+c+}
I_5 := \iint\limits_{[b+iT, c+iT]^2} L(z,w)\,dz\,dw
\ll
\max_{(z,w) \in [b+ iT, c+ iT]^2} L(z,w)
\ll{} \\ \ll
\max_{\sigma_1,\sigma_2 \in [b,c] } \zeta(\sigma_1+iT) \zeta(2\sigma_1+2iT) \zeta(\sigma_2+iT) \zeta(2\sigma_2+iT) \zeta(\sigma_1+\sigma_2+2iT)
\times{} \\ \times
x^{\sigma_1+\sigma_2} T^{-2}
\ll
T^{2\mu(1/3)+3\mu(2/3)-2} x^2 \ll x^2 T^{-1}.
\end{multline}

\medskip

Finally, there are 8 faces, which are parallel either to $z$- or $w$-plane, of form $[b-iT,c+iT] \times w$, where $w\in W:=\{b\pm iT, c\pm iT\}$. We have
\begin{multline*}\label{eq:i6-preliminary}
I_6 :=
\iint\limits_{b-iT}^{~~~~c+iT}
L(z,b+iT) \, dz
\ll
\int_1^T
\int_b^c \zeta(\sigma+it) \zeta(2\sigma+2it) \zeta\bigl(\sigma+b+i(t+T)\bigr)
\times{} \\ \times
\zeta(b+iT) \zeta(2b+2iT)
x^{\sigma+b} t^{-1} T^{-1} d\sigma \, dt
\ll
T^{\mu(1/3)+\mu(2/3)-1} x^{1/3}
\times{} \\ \times
\int_1^T \int_b^c \zeta(\sigma+it) \zeta(2\sigma+2it) \zeta(\sigma+1/3+iT) x^\sigma t^{-1} d\sigma \, dt.
\end{multline*}
Here
$$
\zeta(\sigma+it) \zeta(2\sigma+2it) \zeta(\sigma+1/3+iT) x^\sigma t^{-1}
\ll
T^{\mu(1/3)+2\mu(2/3)-1} x,
$$
so
\begin{equation}\label{eq:int-b-c+1}
I_6 \ll T^{\mu(1/3)+\mu(2/3)-1} x^{1/3} \int_1^T T^{\mu(1/3)+2\mu(2/3)-1} x \, dt
\ll x^{4/3}.
\end{equation}

Also
\begin{multline*}
I_7 := \iint\limits_{b-iT}^{~~~~c+iT} L(z,c+iT) \, dz
\ll
\int_1^T \int_b^c
\zeta(\sigma+it) \zeta(2\sigma+2it)
\times{} \\ \times
\zeta\bigl(\sigma+c+i(t+T)\bigr)
\zeta(c+iT) \zeta(2c+2iT)
x^{\sigma+c} t^{-1} T^{-1} d\sigma \, dt
\ll{} \\ \ll
x T^{-1} \int_{1}^T \int_b^c \zeta(\sigma+it) \zeta(2\sigma+2it) x^{\sigma} t^{-1} d\sigma\, dt
\end{multline*}
We derive from \eqref{eq:int-c-b+c+c+-helper1} and \eqref{eq:int-c-b+c+c+-helper2} that
$$
\int_b^c \zeta(\sigma+it) \zeta(2\sigma+2it) x^{\sigma} d\sigma
\ll t x^{1/3} + x,
$$
so
\begin{equation}\label{eq:int-b-c+2}
I_7 \ll
x T^{-1} \int_{1}^T (x^{1/3} + x t^{-1}) dt
\ll
x^2 T^{-1} + x^{4/3}.
\end{equation}

\medskip

Now summing up \eqref{eq:int-b-c-b+c+}, \eqref{eq:int-b-b+b+c+}, \eqref{eq:int-b-b-b+b+}, \eqref{eq:int-c-b+c+c+}, \eqref{eq:int-b+b+c+c+}, \eqref{eq:int-b-c+1}, \eqref{eq:int-b-c+2} we get
\begin{equation}\label{eq:error-term}
E(x) \ll x^{4/3} T^{1/6} + x^{2+\eps} T^{-1}.
\end{equation}

\section{Conclusion}

Our result can be slightly improved under the Riemann hypothesis. In such case we have $\zeta^{\pm1}(s) \ll x^\eps$ for $\sigma > 1/2$ and $\mu(1/2)=0$ due to \cite[(14.2.5)--(14.2.6)]{titchmarsh1986}. Then~\eqref{eq:tau12-zeta-product} immediately induces $F(z,w) \hm\ll x^\eps \* \zeta(z) \* \zeta(w)$ for $\Re z, \Re w \hm> 1/4$ and all double integrals, incorporated in~$E(x)$, can be split and estimated by a product of two one-dimensional integrals. For $b=1/4+1/\log x$ we obtain
\begin{align*}
\int_{b-iT}^{b+iT} \zeta(z) {x^z\over z} dz &\ll x^{1/4+\eps} T^{1/4}, \\
\int_{c-iT}^{c+iT} \zeta(z) {x^z\over z} dz &\ll x^{1+\eps}, \\
\int_{b\pm iT}^{c\pm iT} \zeta(z) {x^z\over z} dz &\ll (x^{1/2+\eps} T^{1/4} + x^{1+\eps})/T.
\end{align*}
Then $E(x) \ll x^{5/4+\eps} T^{1/4}$ and choice $T=x^{3/5}$ provides us with $\alpha\hm=7/5\hm=1.4$ in the statement of Theorem~\ref{th:main-theorem}.

\medskip

One should expect in the view of \eqref{eq:tau1k-zeta-product} that
\begin{equation}\label{eq:tau1k-sum}
\sum_{m,n\le x} \tau_{1,k}(mn) = D_1 x^2 + D_2 x^{1+1/k} + O(x^{\alpha_k+\eps}).
\end{equation}
Translating the domain of integration till $[b-iT,b+iT]^2$, where $b\hm=1/(k+1)$, leads to the error term at least $x^{{k+2\over k+1}+\eps} T^{{1\over2}-{1\over k+1}} + x^{2+\eps} T^{-1}$, which corresponds to~$\alpha_k \hm= (4k+2)/(3k+1)$ for the best possible choice of~$T$. Under the Riemann hypothesis for $b=1/2k+1/\log x$ we obtain $\alpha_k \hm= (4k-1)/(3k-1)$. However, for~$k\hm>2$ both of these estimates are bigger than~$x^{4/3}$  and  absorbs the term $ D_2 x^{1+1/k}$ in~\eqref{eq:tau1k-sum}. Such result can hardly be reckoned satisfactory.

\medskip

One can consider the exponential divisor function $\taue$, which is multiplicative and defined by $\taue(p^a) = \tau(a)$. As far as $\taue(p^k)=\taa(p^k)$ for $k=1,2,3,4$, the Dirichlet series for $\taue$ also possesses the representation~\eqref{eq:tau12-zeta-product}, so Theorem~\ref{th:main-theorem} remains valid for~$\taue$ instead of $\taa$.

\bibliographystyle{ugost2008s}
\bibliography{taue}

\end{document}